\theoremstyle{plain}
\newtheorem{theorem}{Theorem}[section]
\newtheorem{prop}[theorem]{Proposition}
\theoremstyle{definition}
\newtheorem{definition}[theorem]{Definition}
\newtheorem{example}[theorem]{Example}
\newtheorem{remark}[theorem]{Remark}
\newtheorem*{thank}{Acknowledgments}
\numberwithin{equation}{section}
\newcommand{\ad}{\operatorname{ad}}
\newcommand{\colim}{\operatorname{colim}}
\newcommand{\Deltaop}{\Delta^{\op}}
\newcommand{\hocolim}{\operatorname{hocolim}}
\newcommand{\holim}{\operatorname{holim}}
\newcommand{\Hom}{\operatorname{Hom}}
\newcommand{\Map}{\operatorname{Map}}
\newcommand{\modcat}{\operatorname{mod}}
\newcommand{\op}{\operatorname{op}}
\newcommand{\Sets}{\mathcal Sets}
\newcommand{\SSets}{\mathcal{SS}ets}
\newcommand{\Sp}{\mathcal Sp}
\newcommand{\Spc}{\operatorname{Spc}}
\begin{document}

\title[Homotopy limits]{Homotopy limits of model categories, revisited}

\author[J.E. Bergner]{Julia E. Bergner}

\address{Department of Mathematics, University of Virginia, Charlottesville, VA 22904}

\email{jeb2md@virginia.edu}

\date{\today}

\subjclass[2000]{Primary: 55U40; Secondary: 55U35, 18G55, 18G30, 18D20}

\keywords{model categories, complete Segal spaces, $(\infty,1)$-categories, homotopy theories, homotopy limits}

\thanks{The author was partially supported by NSF grant DMS-1906281.}

\begin{abstract}
The definition of the homotopy limit of a diagram of left Quillen functors of model categories has been useful in a number of applications.  In this paper we review its definition and summarize some of these applications.  We conclude with a discussion of why we could work with right Quillen functors instead, but cannot work with a combination of the two.
\end{abstract}

\dedicatory{To John Greenlees on the occasion of his 60th birthday}

\maketitle

\section{Introduction}

In the papers \cite{fiberprod} and \cite{holim}, we develop constructions for the homotopy pullback, and then more general homotopy limit, of a suitable diagram of left Quillen functors between model categories.  In this paper, we seek to revisit some of the ideas of those papers, with three main goals.  First, we would like to try to make some of the exposition of those papers more accessible to the reader.  Second, we would like to introduce some of the interesting examples that have been developed by other authors since those papers were written. Third, we discuss some possible approaches to working in a more flexible context, namely when we have diagrams with both left and right Quillen functors, and why they do not behave as we might wish.

A first question one might ask is why we would want a notion of homotopy limits of model categories in the first place.  Just as in any other context in which we have various diagrams of mathematical objects, when we have Quillen pairs between model categories it is reasonable to ask whether more complicated diagrams of such have a corresponding model category with appropriate universal properties.  Since the purpose of model categories is arguably to work in a good homotopy-theoretic setting, it is only natural that we would want such a construction to be homotopy invariant in some suitable way.

Often when we want to consider homotopy limits of diagrams, we work in an ambient model category, or more general homotopy theory, in which we have built-in methods for constructing them.  However, there is no known ``model category of model categories", so we must content ourselves with ad hoc constructions. Nonetheless, we would like some way to reassure ourselves that the homotopy limit construction that we define has some right to have that name.  We can do so by migrating to a suitable model of $(\infty,1)$-categories in which homotopy limits can be defined unambiguously.  We do not revisit this issue in this paper, but note that we work with the complete Segal space model in \cite{fiberprod} and \cite{holim}; Harpaz has proved a similar result in the quasi-category model \cite{harpaz}.

After an introduction to some necessary model category background, we begin the main part of the paper with the construction of homotopy limits of model categories, beginning with the more tangible special case of homotopy pullbacks.  We then present a sampling of examples for which this construction has proved useful.  Since this paper is being written for the proceedings of John Greenlees' birthday conference, we give particular attention to the example of adelic models for tensor-triangulated categories that he developed with Balchin \cite{bg}.  We refer the reader to \cite{hr} and \cite{hkvww} for related constructions and examples thereof.

At the end of the paper, we discuss the rigidity of this construction.  Specifically, while it can be modified to the context of model categories and right Quillen functors, we do not get homotopy limits with the correct properties if we try to modify our construction to diagrams consisting of both kinds of functors.

In the paper \cite{hocolim}, we consider a similar definition for homotopy colimits of model categories.  However, for a number of reasons that construction is not as satisfactory as this one, so we do not include it here.  We refer the reader to work of Harpaz and Prasma \cite{hp} for another approach to homotopy colimits.

\begin{thank}
	We would like to thank Scott Balchin for discussions about this paper, as well as the referee for many helpful comments.
\end{thank}

\section{Model categories and localizations}

In this section we give a brief review of some model category notions and techniques that we need for our construction and some of our examples.

Recall that a \emph{model category} $\mathcal M$ is a category with three distinguished classes of morphisms: weak equivalences, fibrations, and cofibrations, satisfying five axioms \cite[3.3]{ds}, \cite{quillen}.  An object $x$ in $\mathcal M$ is \emph{fibrant} if the unique map $x \rightarrow \ast$ to the terminal object is a fibration.  Dually, an object $x$ in $\mathcal M$ is \emph{cofibrant} if the unique map $\varnothing \rightarrow x$ from the initial object is a cofibration.

We also want to consider appropriate functors between model categories.  Because of the dual nature of the fibrations and cofibrations, it is sensible to consider adjoint pairs of functors in this situation.

\begin{definition}
	\begin{enumerate}
		\item Let $\mathcal M$ and $\mathcal N$ be model categories.  An adjoint pair of functors
		\[ F \colon \mathcal M \rightleftarrows \mathcal N \colon G \]
		is a \emph{Quillen pair} if the left adjoint $F$ preserves cofibrations and the right adjoint $G$ preserves fibrations.  We refer to $F$ as a \emph{left Quillen functor} and similarly refer to $G$ as a \emph{right Quillen functor}.  
		
		\item A Quillen pair is a \emph{Quillen equivalence} if, for any cofibrant object $x$ of $\mathcal M$ and any fibrant object $y$ of $\mathcal N$, a map $Fx\rightarrow y$ is a weak equivalence in $\mathcal N$ if and only if its adjoint map $x \rightarrow Gy$ is a weak equivalence in $\mathcal M$.
	\end{enumerate}
\end{definition}

We note that an equivalent definition of Quillen pair is that the left adjoint $F$ preserves cofibrations and acyclic cofibrations.  Using this approach, we could consider left Quillen functors independently of having a right adjoint functor, and analogously consider right Quillen functors as those that preserve fibrations and acyclic fibrations. 

\begin{example} \label{ssetsmc}
Recall that the simplicial indexing category $\Deltaop$ is defined to be the category with objects finite ordered sets $[n]=\{0 \rightarrow 1 \rightarrow \cdots \rightarrow n\}$ and morphisms the opposites of the order-preserving maps between them.  A \emph{simplicial set} is a functor
\[ K \colon \Deltaop \rightarrow \Sets. \]
We denote by $\SSets$ the category of simplicial sets, and this category has a natural model structure Quillen equivalent to the standard model structure on topological spaces originally due to Quillen \cite[I.10]{gj}.  A map of simplicial sets is a weak equivalence precisely if its geometric realization is a weak homotopy equivalence of topological spaces.  The cofibrations are the monomorphisms, and the fibrations are the \emph{Kan fibrations}, or maps with the right lifting property with respect to the horn inclusions $\Lambda^k[n] \rightarrow \Delta[n]$ for $k \geq 1$ and $0 \leq k \leq n$.

Although there are many other possible model structures on the category of simplicial sets, throughout this paper we assume this model structure unless otherwise stated.
\end{example}

\begin{example} \label{m1}
	Given a model category $\mathcal M$, there is also a model structure on the category $\mathcal M^{[1]}$ whose objects of $\mathcal M^{[1]}$ are morphisms of $\mathcal M$, and whose morphisms are given by pairs of morphisms in $\mathcal M$ making the appropriate square diagram commute.  A morphism in $\mathcal M^{[1]}$ is a weak equivalence if its component maps are weak equivalences in $\mathcal M$, and cofibrations are defined analogously.  More generally, $\mathcal M^{[n]}$ is the category with objects strings of $n$ composable morphisms in $\mathcal M$; the model structure can be defined analogously.
\end{example}

We now consider some additional features that a model category might have, beginning with the notion of properness.

\begin{definition}
	A model category $\mathcal M$ is \emph{right proper} if every pullback of a weak equivalence along a fibration is a weak equivalence.  It is \emph{left proper} if every pushout of a weak equivalence along a cofibration is a weak equivalence.  It is \emph{proper} if it is both left and right proper.
\end{definition}

\begin{example}
	The model structure for simplicial sets from Example \ref{ssetsmc} is proper.  The fact that it is left proper follows from the fact that all objects are cofibrant, while the fact that it is right proper requires a bit more verification \cite[13.1.13]{hirsch}.
\end{example}

In practice, most of the model structures that we consider have the additional structure of being cofibrantly generated.

\begin{definition} 
	A model category $\mathcal M$ is \emph{cofibrantly generated} if there exist sets $I$ and $J$, called the \emph{generating cofibrations} and \emph{generating acyclic cofibrations}, respectively, each satisfying the small object argument \cite[10.5.15]{hirsch}, such that a map in $\mathcal M$ is a fibration if and only if it has the right lifting property with respect to the maps in $J$, and a map is an acyclic fibration if and only if it has the right lifting property with respect to the maps in $I$.
\end{definition}

\begin{example}
	The model structure on simplicial sets is cofibrantly generated.  The set $I$ of generating cofibrations can be taken to be the set of boundary inclusions
	\[ \{ \partial \Delta[n] \rightarrow \Delta[n] \mid n \geq 0 \}, \]
	and the set $J$ of generating acyclic cofibrations can be taken to be the set of horn inclusions
	\[ \{ \Lambda^k[n] \rightarrow \Delta[n] \mid n \geq 1, 0 \leq k \leq n\}. \]
\end{example}

We now want to look at model categories which are combinatorial, which, loosely speaking, can be considered to be model categories that behave sufficiently like the model structure on simplicial sets.  The formal definition requires some technical points to be developed, however. Since, in particular, they require the use of filtered colimits, we begin by recalling the definition of filtered diagrams, over which such colimits are taken.

\begin{definition}
	Let $\lambda$ be a regular cardinal.  
	\begin{enumerate}
		\item A category $\mathcal I$ is $\lambda$-\emph{filtered} if any subcategory with fewer than $\lambda$ morphisms has a compatible cocone in $\mathcal I$.
		
		\item An object $c$ of a category $\mathcal C$ is $\lambda$-\emph{small} if for any $\lambda$-filtered category $\mathcal I$ and diagram $X \colon \mathcal I \rightarrow \mathcal C$, 
		\[ \Hom(c, \colim X_i) \cong \colim \Hom(c, X_i). \]
	\end{enumerate}
\end{definition}

\begin{definition} \cite[2.2]{duggercomb}
	A category $\mathcal C$ is \emph{locally presentable} if it is closed under colimits and if there is a regular cardinal $\lambda$ and a set of objects $A$ in $\mathcal C$ such that:
	\begin{enumerate}
		\item every object in $A$ is small with respect to $\lambda$-filtered colimits, and
		
		\item every object in $\mathcal C$ can be expressed as a $\lambda$-filtered colimit of elements of $A$.
	\end{enumerate}
\end{definition}

In what follows, we use the terminology ``filtered colimits" for $\lambda$-filtered colimits for some sufficiently large regular cardinal $\lambda$.

We can now state the definition of a combinatorial model category.

\begin{definition} \cite[2.1]{duggercomb} \label{combmc}
	A model category is	\emph{combinatorial} if it is cofibrantly generated and locally presentable.
\end{definition}

For the purposes of this paper, we want to consider combinatorial model categories because they can often be localized with respect to sets of maps to obtain a new model structure on the same underlying category but with more weak equivalences.  To formalize this idea, we need to consider mapping spaces.

It is often helpful, when working with a model category, to consider not only sets of morphisms between two given objects, but simplicial sets of morphisms. 

\begin{definition}
	A \emph{simplicial category} or \emph{category enriched in simplicial sets} is a category $\mathcal C$ such that for any objects $x$ and $y$ of $\mathcal C$, there is a simplicial set $\Map_\mathcal C(x,y)$ of morphisms from $x$ to $y$, together with a composition law satisfying associativity and unitality.  These simplicial sets are called \emph{mapping spaces}.
\end{definition}

\begin{example}
	The category of simplicial sets has the structure of a simplicial category, where, given simplicial sets $K$ and $L$, the mapping space $\Map(K,L)$ is defined to have $n$-simplices
	\[ \Map(K,L)_n = \Hom(K \times \Delta[n], L), \]
	where $\Delta[n]$ is the standard $n$-simplex.
\end{example} 

Any model category can be equipped with such mapping spaces, chosen in a homotopy invariant way, even if the underlying category does not have a canonical simplicial category structure.  There are a number of ways to define such \emph{homotopy mapping spaces}, including the simplicial localization constructions of Dwyer and Kan \cite{dkfncxes}, \cite{dksimploc}, or the approach via framings of Hirschhorn \cite[\S 19.1]{hirsch}.  We denote the homotopy mapping space from an object $x$ to any object $y$ by $\Map^h(x,y)$.

We can now set up the necessary definitions to discuss localizations of model categories.

\begin{definition}
	Let $\mathcal M$ be a model category and $S$ a class of maps in $\mathcal M$.  
	\begin{enumerate}
	\item An object $x$ of $\mathcal M$ is $S$-\emph{colocal} if it is cofibrant and if for every map $f \colon a \rightarrow b$ in $\mathcal S$, the induced map of homotopy mapping spaces 
	\[ \Map^h_\mathcal M(x,a) \rightarrow \Map^h_\mathcal M(x,b) \] 
	is a weak equivalence of simplicial sets.
	
	\item A map $f \colon d \rightarrow d$ is an $S$-\emph{colocal equivalence} if, for every $S$-colocal object $x$, the induced map of homotopy mapping spaces 
	\[ \Map^h_\mathcal M(x,c) \rightarrow \Map^h_\mathcal M(x,d) \]
	is a weak equivalence of simplicial sets.
	\end{enumerate}
\end{definition}

We often consider the following notion of colocal objects with respect to a set of objects.

\begin{definition} 
	Let $\mathcal M$ be a model category and $K$ a set of objects in $\mathcal M$.  
	A map $x \rightarrow y$ is a $K$-\emph{local equivalence} if, for every object $a$ in $K$, the induced map of homotopy mapping spaces
	\[ \Map^h(a,x) \rightarrow \Map^h(a,y) \]
	is a weak equivalence of simplicial sets.
\end{definition}

\begin{definition}
	Let $\mathcal M$ be a model category, $K$ a set of objects in $\mathcal M$, and $S$ the class of $K$-colocal equivalences.  The \emph{right Bousfield localization} of $\mathcal M$ with respect to $S$, if it exists, is a model structure $\mathcal R_S\mathcal M$ on the underlying category of $\mathcal M$ such that:
	\begin{enumerate}
		\item the weak equivalences are the $S$-colocal equivalences, and 
		
		\item the fibrations are precisely the fibrations of $\mathcal M$.
	\end{enumerate}
\end{definition}

Observe that we could consider such a localization as being with respect to the set $K$ of objects, rather than with respect to the class $S$ of maps.  We have chosen to use the notation $S$ to be parallel with the definition of left Bousfield localization below.

Finally, we bring together the different flavors of model categories we have considered to state the following existence theorem for right Bousfield localizations of model categories.

\begin{theorem} \cite[5.1.1]{hirsch}, \cite[\S 5]{barwick} \label{rightbousexist}
	Let $\mathcal M$ be a right proper combinatorial model category, $K$ a set of objects in $\mathcal M$, and $S$ the class of $K$-colocal equivalences.
	\begin{enumerate}
		\item The right Bousfield localization $\mathcal R_S\mathcal M$ of $\mathcal M$ exists.
		
		\item The cofibrant objects of $\mathcal R_S\mathcal M$ are precisely the $K$-colocal objects of $\mathcal M$.
		
		\item The model category $\mathcal R_S\mathcal M$ is right proper.
	\end{enumerate}
\end{theorem}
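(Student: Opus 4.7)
The plan is to package all three conclusions together via Smith's recognition theorem for combinatorial model categories, taking as weak equivalences the class $S$ of $K$-colocal equivalences and as fibrations the fibrations of $\mathcal M$. The strategy mirrors the usual left Bousfield localization argument, but with the roles of cofibrations and fibrations swapped, and adapted to a set of objects rather than a set of maps.

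First I would record the basic closure properties of $S$: it satisfies 2-out-of-3, contains the weak equivalences of $\mathcal M$ (since $\Map^h(a,-)$ is homotopy invariant for any $a$), is closed under retracts, and is accessible as a full subcategory of the arrow category $\mathcal M^{[1]}$, using combinatoriality of $\mathcal M$. Then I would construct a $K$-cellular replacement functor via the small object argument applied to a set of framed maps built from $K$: for each $y$ one obtains a $K$-cellular object $Qy$ and a fibration $Qy \to y$, which is shown to be a $K$-colocal equivalence by induction along the cellular filtration. Taking the generating trivial cofibrations to include the set $J$ from the cofibrantly generated structure on $\mathcal M$ and the generating cofibrations to be a set of maps with $K$-cellular targets, Smith's theorem then promotes this data to a cofibrantly generated model structure $\mathcal R_S\mathcal M$, settling (1).

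For (2), the cofibrant objects are by construction retracts of $K$-cellular objects, and $K$-colocality is preserved under retracts and under the transfinite composites that build cellular objects; conversely, any $K$-colocal $c$ admits $Qc \to c$ as a trivial fibration in $\mathcal R_S\mathcal M$, which splits since $c$ is already cofibrant in $\mathcal M$, exhibiting $c$ as a retract of a cofibrant object. For (3), given a fibration $g\colon y' \to y$ in $\mathcal M$ and a $K$-colocal equivalence $f\colon x \to y$, right properness of $\mathcal M$ together with the fact that $\Map^h(a,-)$ sends pullbacks along fibrations of $\mathcal M$ to homotopy pullbacks of simplicial sets produces, for each $a \in K$, a homotopy pullback square whose bottom map is a weak equivalence; hence the top map is too, so the pullback of $f$ along $g$ is again a $K$-colocal equivalence.

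The main obstacle is the construction in the second step. Unlike left Bousfield localization, where one simply enlarges the generating trivial cofibrations by adjoining the maps being inverted, the generating cofibrations for a right Bousfield localization must be built from carefully chosen framings of the objects of $K$, so that the small object argument both produces genuinely $K$-colocal output and yields cofibrations that interact correctly with $S$. The framings are precisely what is needed to compute the homotopy mapping spaces appearing in the definition of $S$, and verifying that the resulting cellular replacement lands in the correct homotopical class is the technical heart of the arguments in Hirschhorn and Barwick.
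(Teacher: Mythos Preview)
The paper does not prove this theorem; it is stated with citations to Hirschhorn and Barwick and used as a black box in the subsequent construction of homotopy limits.  There is therefore no proof in the paper to compare your proposal against.

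That said, your outline is a reasonable sketch of the combinatorial approach in Barwick's paper, which does proceed via Smith's recognition theorem and a cellular replacement built from framings of the objects in $K$.  One point in your argument for (2) deserves care: you write that for a $K$-colocal object $c$ the map $Qc \to c$ is a trivial fibration in $\mathcal R_S\mathcal M$ and ``splits since $c$ is already cofibrant in $\mathcal M$''.  But cofibrant in $\mathcal M$ is weaker than cofibrant in $\mathcal R_S\mathcal M$ (the latter has \emph{fewer} cofibrations, since it has more trivial fibrations), so this lifting is not automatic.  The correct argument identifies the cofibrant objects of $\mathcal R_S\mathcal M$ directly as retracts of $K$-cell complexes via the chosen generating cofibrations, and then shows that $K$-colocality is equivalent to being such a retract; the splitting you want comes from showing that for $c$ $K$-colocal, the $K$-colocal equivalence $Qc \to c$ is in fact a weak equivalence in $\mathcal M$ between cofibrant objects of $\mathcal M$, hence admits a homotopy inverse.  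This is exactly the kind of detail you flag as the ``technical heart'' in your final paragraph, so your overall assessment of where the difficulty lies is accurate.
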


Much more common in the literature is the notion of left Bousfield localization, which we now review.  It is not needed for our main construction, but does appear in some of the applications we discuss.

\begin{definition}
	Let $\mathcal M$ be a model category and $S$ a set of maps in $\mathcal M$.
	\begin{enumerate}
		\item A fibrant object $z$ of $\mathcal M$ is $S$-\emph{local} if for every map $a \rightarrow b$ in $S$, the induced map of simplicial sets
		\[ \Map^h(b,z) \rightarrow \Map^h(a,z) \]
		is a weak equivalence.
		
		\item A morphism $x \rightarrow y$ in $\mathcal M$ is an $S$-\emph{local equivalence} if for any $S$-local object $Z$, the induced map
		\[ \Map^h(y, z) \rightarrow \Map^h(x,z) \]
		is a weak equivalence of simplicial sets.
	\end{enumerate}
\end{definition}

\begin{definition}
	Let $\mathcal M$ be a model category and $S$ a set of maps in $\mathcal M$.  The \emph{left Bousfield localization} of $\mathcal M$ with respect to $S$, if it exists, is a model structure $\mathcal L_S \mathcal M$ on the same underlying category as $\mathcal M$, such that:
	\begin{enumerate}
		\item the weak equivalences are the $S$-local equivalences, and
		
		\item the cofibrations are exactly the cofibrations of $\mathcal M$.
	\end{enumerate}
\end{definition}

\begin{theorem} \cite[4.1.1]{hirsch}, \cite[\S 4]{barwick}
	Let $\mathcal M$ be a left proper combinatorial model category and $S$ a set of maps in $\mathcal M$.
	\begin{enumerate}
		\item The left Bousfield localization $\mathcal L_S\mathcal M$ of $\mathcal M$ exists.
		
		\item The model category $\mathcal L_S\mathcal M$ is left proper.
	\end{enumerate}
\end{theorem}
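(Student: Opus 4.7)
The plan is to follow the Smith recognition approach, as developed in Hirschhorn and Barwick. I would define $\mathcal L_S\mathcal M$ to have the same underlying category as $\mathcal M$, with cofibrations the cofibrations of $\mathcal M$ and weak equivalences the $S$-local equivalences; the fibrations are then forced on us as those maps with the right lifting property against cofibrations which are also $S$-local equivalences. To verify the model category axioms, the first step is routine: the class of $S$-local equivalences satisfies the two-of-three property and is closed under retracts, directly from the homotopy mapping space definition, and every weak equivalence of $\mathcal M$ is automatically an $S$-local equivalence since it already induces weak equivalences on $\Map^h$.

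The substantive step is producing a generating set of acyclic cofibrations. First replace each map in $S$ by a cofibration between cofibrant objects via the functorial factorization in $\mathcal M$, obtaining a set $\widetilde S$, and then form an enlarged set $J_S = J \cup H(\widetilde S)$, where $H(\widetilde S)$ consists of the pushout-products of maps in $\widetilde S$ with the boundary inclusions $\partial\Delta[n] \hookrightarrow \Delta[n]$ against a chosen cosimplicial framing. The goal is that a map has the right lifting property against $J_S$ exactly when it is a fibration of $\mathcal M$ whose target (and hence, by pullback, source) is an $S$-local object in the sense already defined. Granting this, the small object argument, which applies because $\mathcal M$ is combinatorial and hence every object in the domains of maps in $I$ and $J_S$ is small, produces the two required factorizations. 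The factorization by $(I,\mathrm{RLP}(I))$ is identical to that in $\mathcal M$, so the characterization of acyclic fibrations is inherited; the lifting axiom in the remaining direction follows from Smith's theorem together with the characterization of $J_S$-injectives above.

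The principal obstacle, and the one step I expect to require genuine work, is the reverse direction: showing that every relative $J_S$-cell complex is an $S$-local equivalence, and consequently that the class of fibrations defined above equals $\mathrm{RLP}(J_S)$. This is the heart of Smith's recognition theorem and requires verifying that the class of $S$-local equivalences is an accessible and accessibly embedded subcategory of $\mathcal M^{[1]}$, closed under transfinite composition and under pushout along cofibrations. Accessibility follows from local presentability of $\mathcal M$ together with the fact that $S$ is a set; closure under transfinite composition follows from the compactness of the cofibrant replacements of domains and codomains of $S$ with respect to filtered colimits along cofibrations; and closure of $S$-local equivalences under pushout along cofibrations uses left properness of $\mathcal M$ in an essential way, via the homotopy mapping space criterion applied to pushout squares.

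Left properness of $\mathcal L_S\mathcal M$ then comes almost for free from the last ingredient: cofibrations are unchanged, and the closure-under-cofibrant-pushout property already established for $S$-local equivalences is exactly the statement that $\mathcal L_S\mathcal M$ is left proper. Thus once the accessibility and pushout-closure arguments underlying Smith's theorem are in place, both conclusions of the theorem follow simultaneously.
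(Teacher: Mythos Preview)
The paper does not prove this theorem at all; it is quoted as background with citations to Hirschhorn and Barwick, and the text moves directly on to the next section. So there is nothing in the paper to compare your argument against.

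That said, your outline is essentially the standard Smith--Barwick argument and is sound in its architecture. Two small corrections are worth flagging. First, Hirschhorn's Theorem 4.1.1 is for left proper \emph{cellular} model categories, not combinatorial ones; the combinatorial statement is due to Smith and written up by Barwick, so your invocation of Smith's recognition theorem is the right framework here. Second, your justification for closure of $S$-local acyclic cofibrations under transfinite composition (``compactness of the cofibrant replacements of domains and codomains of $S$'') is not quite the argument one uses: rather, one observes that mapping a cofibrant object into a transfinite composition of cofibrations yields, after applying $\Map^h(-,z)$ for $z$ an $S$-local object, a tower of acyclic fibrations of simplicial sets, whose inverse limit is again a weak equivalence. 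The accessibility input is used elsewhere, to show that the class of $S$-local equivalences is accessibly embedded in the arrow category so that Smith's theorem applies. With those adjustments your sketch matches the literature proof, and your derivation of left properness of $\mathcal L_S\mathcal M$ from pushout-stability of $S$-local equivalences is exactly right.
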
		

\section{Homotopy pullbacks of model categories}

In this section, we recall the construction of the homotopy pullback of an appropriate diagram of model categories, and we discuss when it can be given the structure of a model category itself.

The following definition first appeared in the literature in \cite{toendha}.  There, and in our original paper \cite{fiberprod}, it was called a \emph{homotopy fiber product} instead.  Here, we distinguish more explicitly between the homotopy limit and a more general preliminary category of diagrams.

\begin{definition}
Suppose that
\[ \xymatrix@1{\mathcal M_1 \ar[r]^{F_1} & \mathcal M_3 & \mathcal M_2 \ar[l]_{F_2}} \] 
is a diagram of left Quillen functors of model categories.  Define their \emph{lax homotopy pullback} to be the category $\mathcal M$ whose objects are given by 5-tuples $(x_1, x_2, x_3; u, v)$ such that each $x_i$ is an object of $\mathcal M_i$ fitting into a diagram
\[ \xymatrix@1{F_1(x_1) \ar[r]^-u & x_3 & F_2(x_2) \ar[l]_-v.} \]
A morphism of $\mathcal M$, say $f \colon (x_1, x_2, x_3; u, v) \rightarrow (y_1, y_2, y_3; z, w)$, is given by a triple of maps $f_i \colon x_i \rightarrow y_i$ for $i=1,2,3$, such that the diagram 
\[ \xymatrix{F_1(x_1) \ar[r]^-u \ar[d]^{F_1(f_1)} & x_3 \ar[d]^{f_3} & F_2(x_2) \ar[l]_-v \ar[d]^{F_2(f_2)} \\
F_1(y_1) \ar[r]^-z & y_3 & F_2(y_2) \ar[l]_-w} \]
commutes.
\end{definition}

\begin{remark}
	For ease of notation, we have simply denoted this category by $\mathcal M$, rather than something more suggestive of a diagram category or even of a homotopy pullback.  In the later section on homotopy limits, we give more explicit notation that one could also employ here if more clarity is needed.
\end{remark}

The lax homotopy pullback $\mathcal M$ can be given the structure of a model category, where the weak equivalences and cofibrations are given levelwise.  In other words, $f$ is a weak equivalence if each map $f_i$ is a weak equivalence in $\mathcal M_i$, and cofibrations are defined analogously.

However, the model category $\mathcal M$ is not yet what we want.  We would like to require the maps $u$ and $v$ to be weak equivalences rather than arbitrary maps.

\begin{definition}
	Suppose that
	\[ \xymatrix@1{\mathcal M_1 \ar[r]^{F_1} & \mathcal M_3 & \mathcal M_2 \ar[l]_{F_2}} \] 
	is a diagram of left Quillen functors of model categories.  Its \emph{homotopy pullback} is the full subcategory of the lax homotopy pullback for which the maps $u$ and $v$ are weak equivalences.
\end{definition}

However, this restriction comes at a price. On its own, the homotopy pullback cannot be given the structure of a model category because it is not generally closed under limits and colimits.  The typical solution in this kind of situation is rather to find a localization of the model structure on the lax homotopy pullback $\mathcal M$ so that the fibrant and cofibrant objects have the maps $u$ and $v$ weak equivalences.  In that direction, we have the following theorem.

\begin{theorem} \cite[3.1]{fiberprod} \label{limmc}
	Let $\mathcal M$ be the lax homotopy pullback of a diagram of left Quillen functors
	\[ \xymatrix@1{\mathcal M_1 \ar[r]^{F_1} & \mathcal M_3 & \mathcal M_2 \ar[l]_{F_2} } \] 
	where each of the categories $\mathcal M_i$ is combinatorial.  Further assume that $\mathcal M$ is right proper. Then there exists a right Bousfield localization of $\mathcal M$ whose cofibrant objects $(x_1, x_2, y_2; u, v)$ have both $u$ and $v$ weak equivalences in $\mathcal M_3$.
\end{theorem}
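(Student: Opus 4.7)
The plan is to apply Theorem \ref{rightbousexist}: given that $\mathcal M$ is combinatorial (right properness being part of the hypothesis), it suffices to exhibit a set $K$ of objects of $\mathcal M$ whose associated $K$-colocal objects are precisely the cofibrant $(x_1, x_2, x_3; u, v)$ in which $u$ and $v$ are weak equivalences of $\mathcal M_3$.

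To verify that $\mathcal M$ is combinatorial, I would first establish local presentability by realizing $\mathcal M$ as a strict $2$-limit of locally presentable categories---$\mathcal M_1$, $\mathcal M_2$, and copies of the arrow category $\mathcal M_3^{[1]}$---glued along the colimit-preserving functors $F_1, F_2$ and the endpoint evaluations of $\mathcal M_3^{[1]}$; local presentability is preserved by such $2$-limits formed along accessible functors. For cofibrant generation, I would take the generating (acyclic) cofibrations of $\mathcal M$ in three families, one for each $\mathcal M_i$: a generating cofibration $a \to b$ of $\mathcal M_1$ extends to the map $(a, \varnothing, F_1 a; \id, \varnothing) \to (b, \varnothing, F_1 b; \id, \varnothing)$ (and symmetrically for $\mathcal M_2$), while $c \to d$ in $\mathcal M_3$ extends to $(\varnothing, \varnothing, c; \varnothing, \varnothing) \to (\varnothing, \varnothing, d; \varnothing, \varnothing)$. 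A routine right-lifting argument identifies the fibrations detected by these generators as the levelwise fibrations of $\mathcal M$, so the levelwise model structure on $\mathcal M$ is indeed combinatorial.

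For the set $K$, I would take a collection of cofibrant objects $(x_1, x_2, x_3; u, v)$ of $\mathcal M$ in which $u$ and $v$ are already weak equivalences, indexed by suitable generators from the $\mathcal M_i$---for instance, for each pair $(a,b)$ with $a$ a cofibrant generator of $\mathcal M_1$ and $b$ of $\mathcal M_2$, an object whose third component is a coherent resolution admitting weak equivalences from both $F_1 a$ and $F_2 b$. By Hirschhorn's cellular characterization of $K$-colocal objects, these coincide up to weak equivalence with cellular constructions from $K$. Since $F_1, F_2$ preserve colimits and each element of $K$ has weak-equivalence structure maps, one direction---that every $K$-colocal object has $u, v$ weak equivalences---follows by tracking these colimits through the cellular construction, using left properness of $\mathcal M_3$ to preserve the equivalence property under pushout.

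The main obstacle will be the reverse: showing that every cofibrant $(x_1, x_2, x_3; u, v)$ in $\mathcal M$ with $u, v$ weak equivalences is $K$-colocal, equivalently weakly equivalent to a $K$-cell complex. This requires inductively building up $x_1$ and $x_2$ via cells from $\mathcal M_1, \mathcal M_2$ while simultaneously constructing a matching third component coherent with $F_1, F_2$, using the given weak equivalences $u, v$ to glue. Right properness of $\mathcal M$ enters here to ensure that the cellular approximations interact well with the structure maps and to produce the needed rectifications of the $u, v$ data without destroying the weak-equivalence condition throughout the transfinite composition.
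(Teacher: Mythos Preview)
Your overall strategy---choose a set $K$ of objects already having $u,v$ weak equivalences, right Bousfield localize at $K$, and then identify the resulting cofibrant objects---is exactly the paper's.  Two points in your execution deserve comment, however.

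First, you overshoot the target.  The theorem only asserts that the cofibrant objects of the localization \emph{have} $u$ and $v$ weak equivalences; it does not claim the converse, that every levelwise-cofibrant object of $\mathcal M$ with $u,v$ weak equivalences is cofibrant in the localization.  Your ``main obstacle''---building a $K$-cellular approximation to an arbitrary such object---is therefore not part of what must be shown, and the paper makes no attempt at it.  (Relatedly, right properness of $\mathcal M$ is used in the paper only to guarantee that the right Bousfield localization exists via Theorem~\ref{rightbousexist}, not for any rectification argument.)

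Second, for the direction that \emph{is} needed, you propose to track the weak-equivalence condition on $u,v$ through the pushouts of a cellular construction, invoking left properness of $\mathcal M_3$.  But left properness of $\mathcal M_3$ is not among the hypotheses.  The paper circumvents this by appealing instead to Hirschhorn's description of the cofibrant objects in a right Bousfield localization as the smallest class of cofibrant objects of $\mathcal M$ containing $K$ and closed under weak equivalence and \emph{homotopy} colimits \cite[5.1.5, 5.1.6]{hirsch}.  Since the elements of $K$ are chosen to be levelwise cofibrant, the structure maps $u,v$ in any homotopy colimit of a diagram of such objects are themselves homotopy colimits of weak equivalences between cofibrant objects of $\mathcal M_3$, and these remain weak equivalences by \cite[19.4.2]{hirsch}---no properness assumption on $\mathcal M_3$ is required.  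Your argument could be repaired along the same lines, but as written it imports a hypothesis that is not available.
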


\begin{proof}
	Since we have assumed that the categories $\mathcal M_1$, $\mathcal M_2$, and $\mathcal M_3$ are combinatorial, and in particular locally presentable, we can find, for each $i=1,2,3$, a set $\mathcal A_i$ of objects of $\mathcal M_i$ generating all of $\mathcal M_i$ by filtered colimits.  We further assume that the objects of each $\mathcal A_i$ are all cofibrant in the corresponding model category $\mathcal M_i$.  For an explicit construction for such sets, we refer the reader to presentations of combinatorial model categories as given by Dugger \cite{duggercomb}.  
	
	Given $a_1 \in \mathcal A_1$ and $a_2 \in \mathcal A_2$, consider the class of all objects $x_3$ in $\mathcal M_3$ such that there are pairs of weak equivalences
	\[ \xymatrix@1{F_1(a_1) \ar[r]^-\simeq  & x_3 & F_2(a_2). \ar[l]_-\simeq} \]  
	For every choice of $a_1$ and $a_2$ such that $F_1(a_1)$ is weakly equivalent to $F_2(a_2)$, choose a single cofibrant object $x_3$ in this weak equivalence class of objects; such a choice is possible since $\mathcal A_1$ and $\mathcal A_2$ have been chosen to be sets.  Define the set $\mathcal B_3$ to be the union of this set of all such objects $x_3$ together with the generating set $\mathcal A_3$ for $\mathcal M_3$.  To retain uniformity of notation going forward, for $i=1,2$, let $\mathcal B_i=\mathcal A_i$.
	
	Consider the set 
	\[ \{(x_1, x_2, x_3;u,v) \mid x_i \in \mathcal B_i, u, v \text{ weak equivalences in } \mathcal M_3\} \] 
	of objects of the lax homotopy limit $\mathcal M$. 
	Taking filtered colimits of such objects, we can obtain the set $\mathcal B$ of all objects $(x_1, x_2, x_3;u,v)$ of $\mathcal M$ for which the maps $u$ and $v$ are weak equivalences.  Although, as we discussed earlier, arbitrary colimits do not necessarily preserve weak equivalences, an important feature of filtered colimits is that they do \cite[7.3]{duggercomb}. 
	
	Now we take a right Bousfield localization of $\mathcal M$ with respect to the set $\mathcal B$.  Since $\mathcal M$ is given by a diagram of combinatorial model categories, it is itself combinatorial, so if $\mathcal M$ is right proper, then this localization has a model structure by Theorem \ref{rightbousexist}.  The class of cofibrant objects of this model category is precisely the smallest class of cofibrant objects of $\mathcal M$ containing this set and closed under homotopy colimits and weak equivalences \cite[5.1.5, 5.1.6]{hirsch}.  Thus, our only remaining step is to show that homotopy colimits of objects in $\mathcal B$ still have $u$ and $v$ weak equivalences.
	
	Suppose that $\mathcal C$ is a small category and $X \colon \mathcal C \rightarrow \mathcal M$ a functor such that the objects in the image of $\mathcal C$ are in the set $\mathcal B$.  In other words, for any object $\alpha$ of $\mathcal C$, we have
	\[ X(\alpha)=(x_1^\alpha, x_2^\alpha, x_3^\alpha; u^\alpha, v^\alpha) \] 
	with $x_i^\alpha \in \mathcal B_i$ and the maps $u^\alpha$ and $v^\alpha$ weak equivalences.  Notice that for each $i=1,2,3$, we have diagrams $X_i \colon \mathcal C \rightarrow \mathcal M_i$ such that for each $\alpha$ in $\mathcal C$, $X_i(\mathcal \alpha) = x_i^\alpha$.  Thus, the homotopy colimit of the original diagram $X$ is the object
	\[ (\hocolim X_1, \hocolim X_2, \hocolim X_3; \hocolim (u), \hocolim (v)) \] 
	in $\mathcal M$.  Since we have assumed that each object $x_i^\alpha$ is in $\mathcal B_i$ and hence cofibrant, we get that the maps $\hocolim (u)$ and $\hocolim (v)$ are weak equivalences in $\mathcal M_3$ by \cite[19.4.2]{hirsch}.
\end{proof}

Unfortunately, the right properness assumption in this theorem is quite restrictive.  In particular, we have not been able to identify conditions on the model categories $\mathcal M_1$, $\mathcal M_2$, and $\mathcal M_3$ guaranteeing that the model category $\mathcal M$ is right proper.  An option for weakening the structure of a model category to accommodate localizations of non-right proper model categories is given by Barwick \cite{barwick}.  Alternatively, when the conditions of this theorem are not satisfied, we can still use the levelwise model structure on the lax homotopy pullback $\mathcal M$ and simply restrict to the homotopy pullback as a subcategory when necessary.

\begin{remark} \label{whyleft}
One might ask why we have chosen to use left Quillen functors.  We could make a dual definition in which we instead use right Quillen functors; the arrows $u$ and $v$ in the definition of the homotopy limit would then more naturally be taken to point in the opposite direction, and underlying model structure should then be taken to be the projective model structure, in which the weak equivalences and fibrations are taken to be levelwise.  Being careful about such details, it is not hard to verify that this dual construction works perfectly well.

One might then wonder if one could find a sensible definition of a homotopy pullback of a left Quillen functor along a right Quillen functor.  Because this discussion requires other homotopy limits of model categories, we defer it to Section \ref{mixed}, after we have discussed the definition of general homotopy limits.
\end{remark}

\section{Examples of homotopy pullbacks of model categories}

Let us now consider some examples of this construction.   We start with some basic general examples that we also described in \cite{fiberprod}.

\begin{example} \label{fiber}
	Let us consider the following special case of a homotopy pullback, the homotopy fiber of a map.  
	
	Let $F \colon \mathcal M \rightarrow \mathcal N$ be a left Quillen functor of model categories.  Then the \emph{homotopy fiber} of $F$ is the homotopy fiber product of the diagram
	\[ \xymatrix{ & \mathcal M \ar[d]^F \\
		\ast \ar[r] & \mathcal N} \] 
	where the map $\ast \rightarrow \mathcal N$ is necessarily the map from the trivial model category on a single object $\ast$ to the initial object $\varnothing$ of $\mathcal N$.
	
	Using the definition of the homotopy pullback, the objects of the homotopy fiber are triples $(\ast, m, n; u, v)$, $m$ is an object of $\mathcal M$, $n$ is an object of $\mathcal N$, $u \colon \varnothing \rightarrow n$ is the unique such map, and $v \colon F(m) \rightarrow n$.  Imposing our condition that $u$ and $v$ be weak equivalences, we get that $n$ must be weakly equivalent to the initial object of $\mathcal N$, and $m$ is any object of $\mathcal M$ whose image under $F$ is weakly equivalent to the initial object of $\mathcal N$.
	
	Here, we see that the requirement that functors be left Quillen is prohibitively restrictive for many examples one might want to consider!  Although we would like to look at the homotopy fiber over arbitrary objects, it is not possible under this hypothesis.

	A further specialization illustrates its particularly odd nature still more.  If we take the analogue of a loop space and define the ``loop model category" as the homotopy pullback of the diagram
	\[ \xymatrix{ & \ast \ar[d] \\
		\ast \ar[r] & \mathcal M} \] 
	for any model category $\mathcal M$, we simply get the subcategory of $\mathcal M$ whose objects are weakly equivalent to the initial object. 
\end{example}

\begin{example}
	To\"en's motivation for defining the homotopy pullback of model categories was to prove associativity of his derived Hall algebras \cite{toendha}.  In this context, we work with a stable model category $\mathcal N$; this extra assumption that the homotopy category is triangulated implies that $\mathcal N$ has a zero object 0 that is both initial and terminal.
	
	Recall the model structure on the morphism category $\mathcal N^{[1]}$ from Example \ref{m1}.  Given an object of $\mathcal N^{[1]}$, namely a map $f \colon x \rightarrow y$ in $\mathcal N$, let $F \colon \mathcal N^{[1]} \rightarrow \mathcal N$ be the target map, so that $F(f \colon x \rightarrow y)= y$.  Let $C \colon \mathcal N^{[1]} \rightarrow \mathcal N$ be the cone map, so that $C(f\colon x \rightarrow y)= y \amalg_x 0$.  Using these functors, we get a diagram
	\[ \xymatrix{ & \mathcal N^{[1]} \ar[d]^C \\
		\mathcal N^{[1]} \ar[r]^F & \mathcal N.} \]
	The homotopy pullback of this diagram is Quillen equivalent to the model category $\mathcal N^{[2]}$ whose objects are pairs of composable morphisms in $\mathcal N$ \cite[\S 4]{toendha}.  The idea is that an object of the homotopy pullback consists of 
	\[ \left(w \rightarrow z, x \rightarrow y, z', u, v \right), \]
	where 
	\[ \xymatrix@1{F(w \rightarrow z) \ar[r]^-u_-\simeq & z' & C(x \rightarrow y). \ar[l]_-v^-\simeq } \]
	Applying the definitions of $F$ and $C$, we get
	\[ \xymatrix@1{z \ar[r]^\simeq & z' & y \amalg_x 0. \ar[l]_-\simeq } \]
	This data is essentially that of a distinguished triangle $x \rightarrow y \rightarrow z$ in the homotopy category, which is precisely an object of $\mathcal N^{[2]}$. 
\end{example}

\begin{example}
	The next example lifts a result in stable homotopy theory to the level of model categories, as first proved by Guti\'errez and Roitzheim \cite{gr}.
	
	Given a prime $p$, let $\mathbb Z_p$ denote the $p$-adic integers.  Then the integers are given by the pullback
	\[ \xymatrix{\mathbb Z \ar[r] \ar[d] & \prod_p \mathbb Z_p \ar[d] \\
	\mathbb Q \ar[r] & \mathbb Q \otimes_\mathbb Z \left( \prod_p \mathbb Z_p \right).} \]
	This ``arithmetic fracture square" has further generalizations in number theory, but in homotopy theory has the following upgrade to the context of spectra.
	
	Let $E$ be a spectrum, and consider the localization functor $L_E$ that inverts maps of spectra that are isomorphisms after applying $E_*$-homology.  Furthermore, given any abelian group $G$, let $MG$ denote its corresponding Moore spectrum, namely, the connective spectrum whose 0th homology group is $G$ and whose other homotopy groups are trivial \cite[III.6]{adams}.  Suppose that $J$ and $K$ form a partition of the set of all prime numbers, and let $\mathbb Z_J$ and $\mathbb Z_K$ denote the set of integers localized at the sets $J$ and $K$, respectively.  Then a classical result is that any spectrum $X$ is the homotopy pullback of the diagram of spectra
	\[ \xymatrix{X \ar[r] \ar[d] & L_{M\mathbb Z_J}X \ar[d] \\
	L_{M\mathbb Z_K}X \ar[r] & L_{M\mathbb Q} X.} \]

	We now use the homotopy pullback of model categories to give a further upgrade to the level of model categories.  Let $\Sp$ denote the model structure for symmetric spectra.  As above, let $E$ be any spectrum, and let $L_E\Sp$ denote the left Bousfield localization of $\Sp$ with respect to the $E_*$-equivalences.  As before, we consider the Moore spectra $M\mathbb Z_J$ and $M\mathbb Z_K$ associated to some partition $J,K$ of the set of primes.  Guti\'errez and Roitzheim prove that $\Sp$ is Quillen equivalent to the homotopy limit of the diagram of left Quillen functors
	\[ \xymatrix{\Sp \ar[r] \ar[d] & L_{M\mathbb Z_J} \Sp \ar[d] \\
		L_{M\mathbb Z_K} \Sp \ar[r] & L_{M\mathbb Q} \Sp.} \]
	Indeed, they prove that this result holds for any left proper, combinatorial, stable model category $\mathcal C$ in place of the model structure $\Sp$ \cite[3.4]{gr}.  See also \cite{duggersp} for details about how why such a localization is possible.
\end{example}



\section{Homotopy limits of model categories}

Let us now discuss how to generalize the homotopy pullback construction to more general homotopy limits of model categories.  While the notation gets more complicated, the essential idea is the same.

If $\mathcal D$ is a small category, then a $\mathcal D$-shaped diagram $\mathcal M$ is given by a collection $\mathcal M_\alpha$, one for each object $\alpha$ of $\mathcal D$, together with left Quillen functors $F_{\alpha, \beta}^\theta \colon \mathcal M_\alpha \rightarrow \mathcal M_\beta$, compatible with one another in the sense that, if $\theta \colon \alpha \rightarrow \beta$ and $\delta \colon \beta \rightarrow \gamma$ are composable maps in $\mathcal D$, then
\[ F_{\alpha, \gamma}^{\delta \theta}=F_{\beta, \gamma}^\delta \circ F_{\alpha, \beta}^\theta. \]
The superscript $\theta$ indexes different left Quillen functors between the same model categories, corresponding to distinct maps $\theta \colon \alpha \rightarrow \beta$.  More precisely, if we consider the (large) category $\mathcal {MC}$ of model categories with left Quillen functors between them, or some small subcategory of it, then we can describe such a diagram as by a functor $\mathcal M \colon \mathcal D \rightarrow \mathcal {MC}$.

\begin{definition}
Let $\mathcal M$ be a $\mathcal D$-shaped diagram of model categories.  Then the \emph{lax homotopy limit} of $\mathcal M$, denoted by $\mathcal L_\mathcal D \mathcal M$, has objects families $(x_\alpha, u^\theta_{\alpha, \beta})$ where $x_\alpha$ is an object of $\mathcal M_\alpha$ and $u^\theta_{\alpha, \beta} \colon F^\theta_{\alpha, \beta}(x_\alpha) \rightarrow x_\beta$ is a morphism in $\mathcal M_\beta$, satisfying the compatibility condition
\[ u_{\alpha, \gamma}^{\delta \theta}= u_{\beta, \gamma}^\delta \circ F_{\beta, \gamma}^\delta(u_{\alpha, \beta}^\theta). \]
Morphisms are component-wise maps of such families, making the appropriate diagrams commute.

The \emph{homotopy limit} for $\mathcal M$, denoted by $\mathcal Lim_\mathcal D \mathcal M$, is the full subcategory of $\mathcal L_\mathcal D \mathcal M$ whose objects satisfy the additional condition that all maps $u_{\alpha, \beta}^\theta$ are weak equivalences in their respective $\mathcal M_\beta$.
\end{definition}

As for the lax homotopy pullback, the lax homotopy limit $\mathcal L_\mathcal D \mathcal M$ can be given the injective model structure, assuming that all model categories in the diagram are sufficiently nice.  On the other hand, just as in the case of the homotopy pullback, $\mathcal Lim_\mathcal D \mathcal M$ does not have the structure of a model category, since the weak equivalence requirement is not preserved by general limits and colimits.  Again, we can ask whether we can find a localization of the more general model structure so that the fibrant-cofibrant objects do have the maps weak equivalences, and obtain the following result.

\begin{theorem} \cite[3.2]{holim}
Let $\mathcal L_\mathcal D \mathcal M$ be the lax homotopy limit of a $\mathcal D$-diagram combinatorial model categories $\mathcal M_\alpha$, and assume that $\mathcal L_\mathcal D \mathcal M$ has the structure of a right proper model category.  Then there exists a right Bousfield localization of $\mathcal L_\mathcal D \mathcal M$ whose cofibrant objects $(x_\alpha, u_{\alpha, \beta}^\theta)$ have all $x_\alpha$ cofibrant and all maps $u_{\alpha, \beta}^\theta$ weak equivalences in $\mathcal M_\beta$.
\end{theorem}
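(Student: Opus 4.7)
The plan is to follow closely the proof of Theorem \ref{limmc}, adapted to a general diagram shape $\mathcal D$. The strategy is to construct a set $\mathcal B$ of objects of the lax homotopy limit $\mathcal L_\mathcal D \mathcal M$ whose structure maps $u^\theta_{\alpha, \beta}$ are all weak equivalences, and then to apply the right Bousfield localization of Theorem \ref{rightbousexist} with respect to $\mathcal B$. By that theorem, the cofibrant objects of the localized model structure will be the smallest class of cofibrant objects of $\mathcal L_\mathcal D \mathcal M$ containing $\mathcal B$ and closed under homotopy colimits and weak equivalences, so the remaining task is to verify that this closure preserves the weak-equivalence-of-structure-maps condition.

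First, using that each $\mathcal M_\alpha$ is combinatorial and hence locally presentable, I would choose for each object $\alpha$ of $\mathcal D$ a set $\mathcal A_\alpha$ of cofibrant objects which generates $\mathcal M_\alpha$ under filtered colimits, via the presentation approach of \cite{duggercomb}. For any system of choices $a_\alpha \in \mathcal A_\alpha$ indexed by the objects of $\mathcal D$ such that, for each morphism $\theta \colon \alpha \to \beta$ in $\mathcal D$, the images $F^\theta_{\alpha, \beta}(a_\alpha)$ lie in a common weak equivalence class in $\mathcal M_\beta$, I would then produce coherent cofibrant objects $x_\beta$ together with weak equivalences $u^\theta_{\alpha, \beta} \colon F^\theta_{\alpha, \beta}(x_\alpha) \to x_\beta$ satisfying the required cocycle identity. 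The set $\mathcal B$ consists of all such families $(x_\alpha, u^\theta_{\alpha, \beta})$; it is genuinely a set because $\mathcal D$ is small and each $\mathcal A_\alpha$ is a set.

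Next, I would apply Theorem \ref{rightbousexist}: the category $\mathcal L_\mathcal D \mathcal M$ is combinatorial, inheriting this property from the $\mathcal M_\alpha$ via the injective model structure on $\mathcal D$-shaped diagrams, and it is right proper by hypothesis. The existence of the right Bousfield localization at $\mathcal B$ follows, and its cofibrant objects form the smallest class of cofibrant objects containing $\mathcal B$ and closed under homotopy colimits and weak equivalences \cite[5.1.5, 5.1.6]{hirsch}. Closure under weak equivalences preserves the condition that the maps $u^\theta_{\alpha, \beta}$ are weak equivalences by two-out-of-three. For closure under homotopy colimits, I would use that homotopy colimits in $\mathcal L_\mathcal D \mathcal M$ are computed componentwise in each $\mathcal M_\alpha$ and then invoke \cite[19.4.2]{hirsch}, which guarantees that the homotopy colimit of a natural weak equivalence between cofibrant diagrams is again a weak equivalence.

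The main obstacle will be the construction of $\mathcal B$ itself. In the pullback case the cocycle condition is vacuous, and one only needs, for each compatible pair $(a_1, a_2)$, a single cofibrant $x_3$ weakly equivalent to both $F_1(a_1)$ and $F_2(a_2)$. For a general $\mathcal D$ the cofibrant choices and structure maps must be made coherently with respect to composition of morphisms of $\mathcal D$. I would handle this by a transfinite induction along a well-founded extension of $\mathcal D$ refining its morphism-composition order, at each stage applying functorial factorization in $\mathcal M_\beta$ to the canonical map from $\colim_{\theta \colon \alpha \to \beta} F^\theta_{\alpha, \beta}(x_\alpha)$ to produce a cofibration followed by an acyclic fibration, thereby simultaneously constructing the cofibrant object $x_\beta$ and the structure maps $u^\theta_{\alpha, \beta}$ satisfying the cocycle identity. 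This mirrors the standard construction of projective cofibrant replacements of diagrams in a model category.
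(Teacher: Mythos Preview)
The paper omits the proof entirely, stating only that it is ``very similar to the one for homotopy pullbacks'' and referring to \cite{holim} for details; your proposal is precisely an adaptation of the proof of Theorem~\ref{limmc} to a general indexing category $\mathcal D$, so it matches the paper's indicated approach.

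One point worth flagging: the coherence obstacle you identify is genuine, but your proposed resolution via ``transfinite induction along a well-founded extension of $\mathcal D$ refining its morphism-composition order'' presupposes that $\mathcal D$ is (or can be treated as) a direct or Reedy category. For an arbitrary small $\mathcal D$ with nontrivial endomorphisms or cycles, no such well-founded refinement exists, and the inductive construction of the $x_\beta$ and $u^\theta_{\alpha,\beta}$ breaks down. A more robust route is to exploit that $\mathcal L_\mathcal D \mathcal M$ is itself combinatorial and that the full subcategory of objects whose structure maps are weak equivalences is accessible and accessibly embedded; one can then take $\mathcal B$ to be a set of $\lambda$-presentable objects generating this subcategory under filtered colimits, bypassing the coherence construction altogether. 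This is closer in spirit to how the paper's pullback proof actually uses the combinatorial hypothesis, and it works uniformly in $\mathcal D$. The remainder of your argument (right Bousfield localization at $\mathcal B$, closure under homotopy colimits via \cite[19.4.2]{hirsch}) goes through unchanged.
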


The proof is very similar to the one for homotopy pullbacks, so we omit it here, referring the reader to \cite{holim} for the details.  The concern about whether a given diagram actually satisfies these conditions is also analogous to the situation for the homotopy pullback, as are the potential resolutions.

\section{Examples of homotopy limits of model categories}

We now consider some examples of homotopy limits of model categories, the first few of which were given in \cite{holim}.

\begin{example}
Let $\mathcal D$ be the category $\bullet \rightarrow \bullet$.  Then, a corresponding diagram of model categories has the form $F \colon \mathcal M_1 \rightarrow \mathcal M_2$.  The homotopy limit has objects $(x_1, x_2; u)$ where $x_i$ is an object of $\mathcal M_i$ for $i=1,2$, and $u \colon F(x_1) \rightarrow x_2$ is a weak equivalence in $\mathcal M_2$.  Thus, the homotopy limit is given by the weak essential image of $\mathcal M_1$ in $\mathcal M_2$, i.e., the subcategory of $\mathcal M_2$ whose objects are weakly equivalent to objects in the image of $F$.
\end{example}

\begin{example}
Let us now consider an equalizer diagram $\bullet \rightrightarrows \bullet$.  Such a diagram of model categories looks like
\[ \xymatrix@1{\mathcal M_1 \ar@<.5ex>[r]^{F_1} \ar@<-.5ex>[r]_{F_2} & \mathcal M_2.} \]  
The homotopy limit has objects $(x_1, x_2; u_1, u_2)$ with $x_i$ an object of $\mathcal M_i$ for $i=1,2$ and $u_1, u_2$ weak equivalences
\[ \xymatrix@1{F_1(x_1) \ar[r]^-{u_1} & x_2 & F_2(x_1). \ar[l]_-{u_2}} \]  
Thus, the equalizer of model categories is equivalent to the subcategory of $\mathcal M_2$ which is in the weak essential image of both $F_1$ and $F_2$.
\end{example}

\begin{example}
As a more interesting example, we show that the category of simplicial sets is equivalent to the homotopy limit of model categories of $n$-\emph{types}, or topological spaces with nontrivial homotopy groups concentrated in degrees $n$ and below.  We apply the homotopy limit construction in the context of simplicial sets, so that we have a combinatorial model structure, but can invoke the Quillen equivalence with topological spaces throughout to recover statements about topological spaces.

Consider the model category $\SSets$ of topological spaces.  For each natural number $n$, take a left Bousfield localization of $\SSets$, denoted by $\SSets_{\leq n}$, with respect to the set of maps
\[ \{\partial \Delta[k] \rightarrow \Delta[k] \mid k>n+1\}. \]  
The weak equivalences in $\SSets_{\leq n}$ are the $n$-\emph{equivalences}, or maps $X \rightarrow Y$ such that the induced maps
\[ \pi_i(|X|) \rightarrow \pi_i(|Y|) \] 
are weak homotopy equivalences for all $i \leq n$.  These model structures form a diagram of left Quillen functors
\[ \cdots \rightarrow \SSets_{\leq 3} \rightarrow \SSets_{\leq 2} \rightarrow \SSets_{\leq 1} \rightarrow \SSets_{\leq 0}. \]

The homotopy limit of this diagram has objects
\[ \cdots \rightarrow X_3 \rightarrow X_2 \rightarrow X_1 \rightarrow X_0 \] 
for which each map $X_{n+1} \rightarrow X_n$ is an $n$-equivalence, and morphisms given by $n$-equivalences $X_n \rightarrow Y_n$ for all $n \geq 0$ such that the resulting diagram commutes.

There is a functor $\SSets \rightarrow \mathcal Lim_n \SSets_{\leq n}$ taking a space $X$ to the constant sequence on $X$.  In the other direction, there is a functor sending a diagram of spaces
\[ \cdots \rightarrow X_3 \rightarrow X_2 \rightarrow X_1 \rightarrow X_0 \] to its homotopy limit $\holim_n X_n$ in $\SSets$.

Taking the constant diagram on a simplicial set $X$, followed by its homotopy limit, recovers $X$.  Composing in the other direction, we need to prove that a diagram of simplicial sets
\[ \cdots \rightarrow X_3 \rightarrow X_2 \rightarrow X_1 \rightarrow X_0 \] 
is equivalent to the constant diagram given by $\holim_n X_n$.  In other words, we want to show that the homotopy limit of this diagram is $n$-equivalent to $X_n$ for all $n \geq 0$, a fact which follows from \cite[19.6.13]{hirsch}. 
\end{example}

The following example is very similar in nature and gives a variant of the Chromatic Convergence Theorem \cite[7.5.7]{rav} at the model category level.

\begin{example}
	For a fixed prime $p$ and finite spectrum $X$, let $L_{n}X$ denote the $n$-th chromatic localization of the spectrum $X$, namely, the localization with respect to the spectrum $E(n)$.  The Chromatic Convergence Theorem states that $X$ is equivalent to the homotopy limit of these localized spectra.
	
	Now, let $\Sp$ denote the model category of symmetric spectra, and $L_n\Sp$ the left Bousfield localization of $\Sp$ with respect to the $E(n)$-equivalences.  After restricting to sufficiently finitary diagrams of spectra (namely, those whose homotopy limits are finite spectra), there is a Quillen pair between $\Sp$ and the homotopy limit of the model categories $L_n\Sp$.  Unfortunately, although Guti\'errez and Roitzheim are able to prove that the unit of this adjunction is a weak equivalence, they are not able to verify what we would hope to be true, that this adjunction is in fact a Quillen equivalence \cite [\S 2.2]{gr}.  
\end{example}

\section{Adelic models of tensor-triangulated categories}

In this section we summarize the role of the homotopy limit of model categories used by Balchin and Greenlees in \cite{bg}.  We have made this example its own section due to the background content necessary to explain it.

The setting for this work is that of tensor-triangulated categories, and more generally, model categories whose homotopy categories have such a structure.  In brief, a tensor-triangulated category is a triangulated category that also has the structure of a monoidal category.  We refer to Balmer's paper for more detail \cite{balmer}.

Let us recall some notions from the general theory.  Recall that a subcategory of a tensor-triangulated category is \emph{thick} if it is closed under retracts and completion of distinguished triangles.  It is an \emph{ideal} if it is closed under completing triangles and tensoring with arbitrary elements of the ambient category.  

We can now define the Balmer spectrum, which is one of the key structures of interest in this section.

\begin{definition} 
	Let $\mathcal T$ be a tensor-triangulated category.
	\begin{enumerate}
		\item A \emph{prime ideal} in $\mathcal T$ is a proper thick ideal $\mathfrak p$ such that if $a \otimes b$ is an object of $\mathfrak p$, then either $a$ or $b$ is also an object of $\mathfrak p$.
		
		\item The \emph{Balmer spectrum} of $\mathcal T$ is the set of prime ideals of the triangulated subcategory of small objects in $\mathcal T$, denoted by $\Spc^\omega(\mathcal T)$.
	\end{enumerate}
\end{definition}

Here, we follow the naming convention of Balchin and Greenlees, in calling ``small" objects that are often referred to as ``compact" in the context of triangulated categories.

In fact, the Balmer spectrum has the structure of a topological space, using the Zariski topology, and as such can be assigned a dimension.  It is \emph{Noetherian} if its open sets satisfy the ascending chain condition.


%

We can now define our primary tensor-triangulated categories of interest.

\begin{definition}
	A tensor-triangulated category is \emph{finite-dimensional Noetherian} if it is rigidly small-generated and its Balmer spectrum is finite-dimensional and Noetherian.
\end{definition}

We have not defined what it means to be rigidly small-generated here, since it is rather technical, but instead refer the reader to \cite[\S 2.A]{bg} for more information.  We shortly discuss in more depth the model category analogue that is of more interest to us here.

In that direction, as we stated at the beginning of this section, we are interested in model categories whose homotopy categories are the kinds of tensor-triangulated categories that we have been discussing and hence have well-behaved Balmer spectra.  We begin with some additional structures on a model category.  The first uses some possibly unfamiliar additional terms, for which we refer the reader to \cite[\S 4.B]{bg}.

\begin{definition}
	A model category $\mathcal C$ is \emph{rigidly compactly generated} if it is a stable, proper, compactly generated symmetric monoidal model category such that:
	\begin{itemize}
		\item the monoidal unit $\mathbb 1$ is $\omega$-cell compact and cofibrant, where $\omega$ denotes the smallest infinite ordinal, and
		
		\item there is a set of generating cofibrations of the form $S^n \otimes \mathcal G \rightarrow D^{n+1} \otimes \mathcal G$ where $\mathcal G$ is a set of $\omega$-cell compact and $\omega$-small cofibrant objects whose images in the homotopy category of $\mathcal C$ are rigid.
	\end{itemize}
\end{definition}

\begin{definition}
	A model category $\mathcal C$ is \emph{finite dimensional Noetherian} if it is a rigidly compactly generated symmetric monoidal model category such that the Balmer spectrum of its homotopy category is Noetherian and finite-dimensional.  In particular, the homotopy category is a finite dimensional Noetherian tensor-triangulated category.
\end{definition}

For the remainder of this section assume that $\mathcal C$ satisfies the conditions of this definition.

Consider the Balmer spectrum of $\mathcal C$.  Given an object $M$ of $\mathcal C$ and Balmer prime $\mathfrak p$, we denote by $L_\mathfrak pM$ the localization of the object $M$ with respect to $\mathfrak p$.  We denote the completion of $M$ at $\mathfrak p$ by $\Lambda_\mathfrak pM$.  We refer the reader to \cite{bg} for a precise description of how to generalize these standard notions from commutative algebra to the particular model category setting considered here.

Let $n$ be the dimension of the Balmer spectrum of the homotopy category of $\mathcal C$ where $\mathcal C$ is as above.  Let $[n] =\{0 \leq 1 \leq \ldots \leq n\}$, and consider the cubical diagram $\mathcal P[n]$ given by the subsets of $[n]$.  Further consider the diagram of nonempty subsets $\mathcal P_0[n]$.

\begin{definition}
	Given any object $M$ of $\mathcal C$, the \emph{adelic diagram} 
	\[ M_{\ad} \colon \mathcal P_0([n]) \rightarrow \mathcal C \]
	is defined by, for any subset $(d_0 > \cdots >d_s)$ of $[n]$, 
	\[ M_{\ad}(d_0 > \cdots >d_s) := \prod_{\dim(\mathfrak p_0)=d_0} L_{\mathfrak p_0} \prod_{\dim(\mathfrak p_1)=d_1, \mathfrak p_1 \subsetneq \mathfrak p_0} L_{\mathfrak p_1} \cdots \prod_{\dim(\mathfrak p_s)=d_s, \mathfrak p_s \subsetneq \mathfrak p_{s-1}} L_{\mathfrak p_s} \Lambda_{\mathfrak p_s} M. \]
	The morphisms are as described in \cite{bg}.
\end{definition}

For example, when $n=1$, we get the homotopy pullback diagram
\[ \xymatrix{& \prod_{\dim(\mathfrak p_0)=0} L_{\mathfrak p_0} \Lambda_{\mathfrak p_0} M \ar[d] \\
	\prod_{\dim(\mathfrak p_1)=1} L_{\mathfrak p_1} \Lambda_{\mathfrak p_1} M \ar[r] & \prod_{\dim(\mathfrak p_0)=0} L_{\mathfrak p_0} \prod_{\dim(\mathfrak p_1)=1} L_{\mathfrak p_1}\Lambda_{\mathfrak p_1} M } \]

The following Adelic Approximation Theorem gives a description of the unit object in $\mathcal C$ as a homotopy limit of a cubical diagram.  For simplicity, following the example just given, we state it in the 1-dimensional case which is given by a homotopy pullback; the general case requires more general homotopy limits.

\begin{theorem} \cite[8.1]{bg}
	Let $\mathcal C$ be an finite-dimensional Noetherian model category with $1$-dimensional Balmer spectrum.  Then the unit object $\mathbb 1$ of $\mathcal C$ is the homotopy pullback of the diagram $\mathbb 1_{\ad} \colon \mathcal P_0[1] \rightarrow \mathcal C$ given by
	\[ \xymatrix{& \prod_{\dim(\mathfrak p_0)=0} L_{\mathfrak p_0} \Lambda_{\mathfrak p_0} \mathbb 1 \ar[d] \\
		\prod_{\dim(\mathfrak p_1)=1} L_{\mathfrak p_1} \Lambda_{\mathfrak p_1} \mathbb 1 \ar[r] & \prod_{\dim(\mathfrak p_0)=0} L_{\mathfrak p_0} \prod_{\dim(\mathfrak p_1)=1} L_{\mathfrak p_1}\Lambda_{\mathfrak p_1} \mathbb 1 } \]
\end{theorem}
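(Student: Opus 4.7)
The plan is to construct a canonical comparison map $\mathbb 1 \to P$, where $P$ denotes the homotopy pullback of the displayed diagram, and then verify it is a weak equivalence by reducing to a Mayer--Vietoris / fracture-square argument at the level of Balmer primes. The canonical map is built by assembling, for each non-empty subset of $[1]$, the natural unit maps $\mathbb 1 \to L_{\mathfrak p}\Lambda_{\mathfrak p}\mathbb 1$ into a cone over $\mathbb 1_{\ad}$; naturality of $L$ and $\Lambda$ supplies compatibility at the bottom-right vertex, so the cone factors through $P$.

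To verify that $\mathbb 1 \to P$ is an equivalence, the first step is to invoke the local-global principle in the tensor-triangulated setting: for a Noetherian Balmer spectrum, an object of $\mathcal C$ is determined up to equivalence by its localizations and completions at all Balmer primes. The second and crucial step is stratification by dimension. Because $\dim \Spc^\omega(\mathcal C)=1$, the spectrum decomposes into a closed subset $V$ consisting of the dimension-$0$ primes $\mathfrak p_0$ together with its open complement $U$ consisting of the generic primes $\mathfrak p_1$. The associated localization cofiber sequence
\[ \Gamma_V \mathbb 1 \to \mathbb 1 \to L_U \mathbb 1 \]
together with Greenlees--May style identifications $\Gamma_V \mathbb 1 \simeq \prod_{\dim \mathfrak p_0=0} L_{\mathfrak p_0}\Lambda_{\mathfrak p_0}\mathbb 1$ (using that $V$ is a finite set of closed points, which is where the Noetherian hypothesis is consumed) and $L_U \mathbb 1 \simeq \prod_{\dim \mathfrak p_1=1}L_{\mathfrak p_1}\Lambda_{\mathfrak p_1}\mathbb 1$ produce the two non-terminal vertices of the adelic diagram. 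The third step is to observe that the square these fit into is homotopy cartesian: this is the classical arithmetic fracture square applied to the unit, after one checks that the bottom-right vertex can be rewritten in the form given in the theorem statement by applying $L_U$ to the top-right vertex and using the product decompositions of $U$ and $V$.

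The main obstacle, I expect, will be making the Greenlees--May identifications and the local-global principle available for a general rigidly compactly generated model category $\mathcal C$, rather than in the more classical derived category of a commutative Noetherian ring or a chromatic stable homotopy category. In particular, one must verify that $L_{\mathfrak p}$ and $\Lambda_{\mathfrak p}$ can be realized by derived Quillen functors whose interaction reproduces the expected local cohomology / completion behaviour, that the Noetherian and finite-dimensional hypotheses genuinely force the finite product decomposition of $\Gamma_V$ indexed over closed points, and that the relevant naturality squares commute coherently at the model category level rather than merely in the triangulated homotopy category. Once this structural machinery is in place---which is the thrust of the technical development in \cite{bg}---the identification of the adelic square with a classical fracture square is formal, and the pullback statement follows from the stability of homotopy pullbacks under finite products.
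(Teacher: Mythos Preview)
The paper itself does not prove this theorem; it is quoted from \cite{bg} as background for the adelic models example, so there is no in-paper proof to compare against.  I can therefore only assess your proposal on its own merits.

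The overall shape of your argument---produce a cone on $\mathbb 1_{\ad}$ from the unit maps of localization and completion, and then identify the resulting square with a fracture square---is reasonable and is indeed the spirit of the argument in \cite{bg}.  But the proposal contains a genuine error at the point where you invoke the Noetherian hypothesis.  You assert that $V$, the set of dimension-$0$ (closed) Balmer primes, is a \emph{finite} closed subset, and that this is ``where the Noetherian hypothesis is consumed''.  This is false: a Noetherian Balmer spectrum can have infinitely many closed points, the basic example being $\Spc^\omega(D(\mathbb Z))\cong\operatorname{Spec}(\mathbb Z)$.  What the Noetherian hypothesis actually buys is finitely many \emph{irreducible components}, hence finitely many dimension-$1$ (generic) primes.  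Moreover, the set of all closed points of a $1$-dimensional irreducible Noetherian space is not closed (its closure is the whole space), so your open/closed decomposition $U\amalg V$ does not exist as stated.

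A second, related gap is the identification $\Gamma_V\mathbb 1\simeq\prod_{\dim\mathfrak p_0=0}L_{\mathfrak p_0}\Lambda_{\mathfrak p_0}\mathbb 1$.  Even when $V$ is closed, $\Gamma_V$ is a torsion (colocalization) functor, not a product of completions; the relationship between $\Gamma$ and $\Lambda$ is mediated by Greenlees--May duality and does not yield that formula directly.  The argument in \cite{bg} does not attempt such an identification.  It instead proceeds by induction on dimension: one uses a Hasse square relating $\mathbb 1$ to its localization at the finitely many generic primes and the completion along their closed complement, and observes that after localizing at a generic prime the dimension of the spectrum drops, so the inductive hypothesis applies.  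Your sketch has the right endpoint in mind for the $1$-dimensional case but misidentifies which set of primes is finite and which functor is responsible for the product decomposition.
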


We similarly state the following theorem in this simplified case.

\begin{definition} \cite[9.1]{bg} \label{bg9.1}
	Let $\mathcal C$ be a finite-dimensional Noetherian model category with $1$-dimensional Balmer spectrum.  We define the adelic module category $\mathbb 1_{\ad}-\modcat_\mathcal C$ to be the diagram of module categories of the appropriate adelic rings
	\[ \xymatrix{ & \mathbb 1_{\ad}(0)-\modcat_\mathcal C \ar[d] \\
		\mathbb 1_{\ad}(1)-\modcat_\mathcal C \ar[r] & \mathbb 1_{\ad}(0<1)-\modcat_\mathcal C.} \]
	The morphisms are given by extension of scalars functors corresponding to the maps of rings, which are in particular left Quillen, and we can give the category of such diagrams the injective model structure.
\end{definition}

Now, given an object of $\mathcal C$, we can tensor with the diagram $\mathbb 1_{\ad}$ to obtain an object of $\mathbb 1_{\ad}-\modcat_\mathcal C$; given such a diagram, we can take its homotopy limit.

\begin{prop} \cite[9.2]{bg} \label{bg9.2}
	The tensor and limit functors define a Quillen pair
	\[ \mathbb 1_{\ad} \otimes - \colon \mathcal C \leftrightarrows \mathbb 1_{\ad}-\modcat_\mathcal C \colon \lim. \]
\end{prop}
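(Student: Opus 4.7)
The plan is to verify the claim in two stages: first establish the adjunction between the tensor and limit functors, and then check the Quillen pair condition by exploiting the fact that the injective model structure on $\mathbb{1}_{\ad}-\modcat_\mathcal{C}$ has levelwise cofibrations and weak equivalences, so the left adjoint's behavior can be analyzed one vertex of $\mathcal{P}_0[n]$ at a time.

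For the adjunction, the key observation is that at each object $\sigma$ of $\mathcal{P}_0[n]$ there is a standard extension/restriction of scalars adjunction along the unit map $\mathbb{1} \to \mathbb{1}_{\ad}(\sigma)$, which identifies $\mathbb{1}_{\ad}(\sigma)$-module maps $\mathbb{1}_{\ad}(\sigma) \otimes M \to N(\sigma)$ with morphisms $M \to N(\sigma)$ in $\mathcal{C}$. Assembling these bijections, a morphism of diagrams $\mathbb{1}_{\ad} \otimes M \to N$ corresponds to a family of morphisms $M \to N(\sigma)$ in $\mathcal{C}$ compatible with the structure maps of $N$; by the universal property of the limit, this is precisely a single morphism $M \to \lim N$, giving the required natural bijection.

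For the Quillen pair condition, since cofibrations and acyclic cofibrations in the injective model structure are detected levelwise, it suffices to show that for each $\sigma$ the functor $\mathbb{1}_{\ad}(\sigma) \otimes -$ preserves cofibrations and acyclic cofibrations out of $\mathcal{C}$. By the pushout-product axiom built into the symmetric monoidal model structure on $\mathcal{C}$, this reduces to ensuring that $\mathbb{1}_{\ad}(\sigma)$ is cofibrant (or at least weakly equivalent to a cofibrant object in a way that is coherent across the diagram). The unit $\mathbb{1}$ is cofibrant by the rigidly compactly generated hypothesis in Definition \ref{bg9.1}, and since left Bousfield localizations leave the class of cofibrations unchanged, the iterated localizations appearing in the formula for $\mathbb{1}_{\ad}(\sigma)$ should preserve this property.

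The main obstacle is the cofibrancy of the adelic ring objects $\mathbb{1}_{\ad}(\sigma)$: these are constructed from infinite products and iterated applications of the localization functors $L_\mathfrak{p}$ and completion functors $\Lambda_\mathfrak{p}$, and one must verify uniformly in $\sigma$, and compatibly with the structure maps of $\mathbb{1}_{\ad}$ that serve as extensions of scalars, that these operations yield sufficiently cofibrant objects of $\mathcal{C}$. Once this technical point is settled via the detailed construction of the adelic objects in \cite{bg}, the remainder follows from standard properties of extension/restriction of scalars combined with the levelwise nature of the injective model structure.
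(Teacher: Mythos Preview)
The paper does not give its own proof of this proposition; it is merely quoted from \cite[9.2]{bg} as part of the exposition leading to the adelic model theorem, so there is nothing in the present paper to compare your argument against.

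That said, your overall strategy is the standard one and matches what one expects from \cite{bg}: establish the adjunction via the levelwise extension/restriction of scalars adjunctions together with the universal property of the limit, and then verify the left Quillen condition levelwise using that cofibrations and acyclic cofibrations in the injective model structure are detected objectwise. One point deserves correction, however. Your sentence ``since left Bousfield localizations leave the class of cofibrations unchanged, the iterated localizations appearing in the formula for $\mathbb{1}_{\ad}(\sigma)$ should preserve this property'' conflates two different uses of the word \emph{localization}. The statement that left Bousfield localization preserves cofibrations concerns the passage from one model structure to another on the same underlying category; it says nothing about applying a localization \emph{functor} $L_{\mathfrak p}$ (or a completion functor $\Lambda_{\mathfrak p}$) to an object. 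Such functors are typically built as fibrant replacements in an auxiliary model structure and need not land in cofibrant objects. The cofibrancy of the adelic rings $\mathbb{1}_{\ad}(\sigma)$, and the compatibility of cofibrant models across the cube, therefore requires the explicit constructions and replacements carried out in \cite{bg}, exactly as you flag in your final paragraph; your proposed justification for it does not work as stated.
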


Finally, we come to the application of the homotopy limit construction to this situation.

\begin{theorem} \cite[9.3]{bg}
	Let $\mathcal C$ be an finite-dimensional Noetherian model category.  Then there is a Quillen equivalence between $\mathcal C$ and the homotopy limit of its associated diagram of adelic module categories $\mathbb 1_{\ad}- \modcat_\mathcal C$.
\end{theorem}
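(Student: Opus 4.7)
The candidate Quillen equivalence is the adjunction of Proposition~\ref{bg9.2},
\[ \mathbb 1_{\ad} \otimes - \colon \mathcal C \rightleftarrows \mathbb 1_{\ad}-\modcat_\mathcal C \colon \lim, \]
viewed with its target now carrying the right Bousfield localized model structure that realizes the homotopy limit of the cubical diagram of adelic module categories, in the sense of the previous section. I would organize the argument into three main steps.

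First, I would verify that this Quillen pair descends to the localized target. Since right Bousfield localization leaves fibrations unchanged, $\lim$ remains right Quillen without further work, so only the left adjoint requires attention. The key observation is that for any $M$ in $\mathcal C$, the structure map
\[ \mathbb 1_{\ad}(\beta) \otimes_{\mathbb 1_{\ad}(\alpha)} \bigl(\mathbb 1_{\ad}(\alpha) \otimes M\bigr) \longrightarrow \mathbb 1_{\ad}(\beta) \otimes M \]
of the extended diagram $\mathbb 1_{\ad} \otimes M$ is the canonical isomorphism, so $\mathbb 1_{\ad} \otimes M$ automatically has weak equivalences as structure maps and is therefore colocal whenever $M$ is cofibrant in $\mathcal C$. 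There is a side verification that the underlying injective lax homotopy limit is right proper, so that the localization of Theorem~\ref{rightbousexist} exists at all; this should follow from the properness and rigid compact generation hypotheses on $\mathcal C$.

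Second, I would show that the derived unit is a weak equivalence on cofibrant objects. For cofibrant $M$ the derived unit reads $M \to \lim(\mathbb 1_{\ad} \otimes M) = \lim M_{\ad}$, so the claim is that $M$ is the homotopy limit of its adelic diagram. The $M = \mathbb 1$ case is exactly the Adelic Approximation Theorem (the cubical generalization of the one-dimensional statement given earlier). The general case follows by tensoring the unit equivalence with a cofibrant $M$ and using that the localization and completion functors appearing in $(-)_{\ad}$ commute sufficiently with $M \otimes -$ to identify $M \otimes \lim \mathbb 1_{\ad} \simeq \lim(M \otimes \mathbb 1_{\ad}) = \lim M_{\ad}$.

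Third, and this is the step I expect to be the main obstacle, one must verify that the derived counit is a levelwise weak equivalence on every fibrant-cofibrant object $N = (N_\alpha, u_{\alpha,\beta}^\theta)$ of the localized homotopy limit. At level $\alpha$ the counit reads $\mathbb 1_{\ad}(\alpha) \otimes \lim N \to N_\alpha$, and the hard content is to prove this using only that the structure maps $u_{\alpha,\beta}^\theta$ are weak equivalences. The plan is first to handle the free diagrams $N = \mathbb 1_{\ad} \otimes M$, where the counit reduces to the unit calculation of step two. The principal difficulty is then to propagate this to a general fibrant-cofibrant $N$: following Balchin-Greenlees, one uses that such an $N$ is, up to weak equivalence, determined by a compatible family of base changes of a single $\mathcal C$-object via the $u_{\alpha,\beta}^\theta$, so that the comparison reduces to the free case once one checks compatibility of $\mathbb 1_{\ad}(\alpha) \otimes -$ with the base-change structure. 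The finiteness and Noetherian character of the Balmer spectrum of $\mathcal C$, which ensures that the indexing diagram $\mathcal P_0([n])$ is a finite cube, is essential for making this propagation terminate.
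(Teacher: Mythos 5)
The paper you are working from does not itself prove this theorem: it simply records it with the citation \cite[9.3]{bg} and immediately afterwards notes that $\mathcal C$ is \emph{not} assumed to be combinatorial, pointing the reader to \cite[4.20, 9.3]{bg} for how Balchin and Greenlees work around that. So there is no in-paper proof against which to measure your attempt; you should compare against the source \cite{bg} directly. That said, here is an assessment of your sketch on its own terms.

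Your overall framing is correct: the candidate equivalence is the adjunction of Proposition~\ref{bg9.2} mapping into a right Bousfield localization (``cellularization'') of the injective model structure on the diagram category, and the Adelic Approximation Theorem furnishes the needed unit equivalence at $\mathbb 1$. Where your argument diverges from the actual one in \cite{bg} is step three. Balchin and Greenlees do not verify the derived counit by hand on a general fibrant--cofibrant diagram. Instead they invoke a general \emph{cellularization principle} (in the spirit of Greenlees--Shipley): given a Quillen adjunction $F \dashv G$ and a set of cofibrant cells $K$ in the source, cellularizing the source at $K$ and the target at $FK$ yields a Quillen equivalence provided the derived unit is an equivalence on the cells and the cells generate. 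Since $\mathcal C$ is rigidly compactly generated with unit $\mathbb 1$, this reduces the entire theorem to the unit statement, which is Adelic Approximation. Your ``propagation from free diagrams to general $N$'' is exactly what the cellularization principle packages; trying to carry it out directly as you propose would require reproving that principle in situ and is where a hands-on approach would get stuck.

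Two further points. First, the side verification you flag as needed for the localization to exist is the wrong one: the genuine obstacle, explicitly called out after the theorem in the paper, is that $\mathcal C$ (hence the lax homotopy limit) is not assumed combinatorial, so Theorem~\ref{rightbousexist} and Theorem~\ref{limmc} as stated do not apply; one must use the alternative existence arguments for cellularization developed in \cite{bg}. Second, in step two you silently identify $\mathbb 1_{\ad} \otimes M$ with $M_{\ad}$. These are different diagrams a priori (the first applies $L_{\mathfrak p}\Lambda_{\mathfrak p}$ to $\mathbb 1$ and then tensors with $M$, the second applies $L_{\mathfrak p}\Lambda_{\mathfrak p}$ to $M$), and the comparison is a real lemma about the adelic functors being smashing in the relevant sense, not a definitional equality; you gesture at this but it deserves to be surfaced as a distinct step.
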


Observe that $\mathcal C$ is not assumed to be combinatorial here; see \cite[4.20, 9.3]{bg} for a discussion of how to work around this obstacle.

\section{On not mixing left and right Quillen functors}  \label{mixed}

We now return to the question raised in Remark \ref{whyleft}, namely, whether we can define a homotopy pullback of a diagram of model categories
\[ \mathcal M_1 \overset{F_1}{\rightarrow} \mathcal M_3 \overset{G_2}{\leftarrow} \mathcal M_2, \]
where $F_1$ is a left Quillen functor and $G_2$ is a right Quillen functor.  

In this section, we talk though possible solutions to this question, but ultimately why they are unsatisfactory.  These ideas were developed in several discussions with Yuri Berest.

A first idea for how to mix the kinds of functors is to make the following definition.

\begin{definition}
	Consider a diagram of model categories
	\[ \mathcal M_1 \overset{F_1}{\rightarrow} \mathcal M_3 \overset{G_2}{\leftarrow} \mathcal M_2   \]
	where $F_1$ is a left Quillen functor and $G_2$ is a right Quillen functor with left adjoint $F_2$.  Define the \emph{lax homotopy pullback} $\mathcal L$
	to be the category whose objects are 5-tuples $(x_1, x_2, x_3; u,v)$ where each $x_i$ is an object of $\mathcal M_i$, and 
	\[ u \colon F_1(x_1) \rightarrow x_3, v \colon F_2(x_3) \rightarrow x_2. \]
	A morphism 
	\[ f \colon (x_1, x_2, x_3; u,v) \rightarrow (y_1, y_2, y_3; z, w) \]
	is given by morphisms $f_i \colon x_i \rightarrow y_i$ in $\mathcal M_i$ such that the diagrams
	\[ \xymatrix{F_1(x_1) \ar[r]^-u \ar[d]_{F_1(f_1)} & x_3 \ar[d]^{f_3} & x_2 \ar[d]_{f_2} & F_2(x_3) \ar[d]^{F_2(f_3)} \ar[l]_-v \\
		F_1(y_1) & y_3 & y_2 \ar[r]^-w & F_2(y_3) \ar[l]_-z} \]
	commute.  Give $\mathcal L$ the injective model structure, in which the weak equivalences and cofibrations are defined levelwise.
	
	The \emph{homotopy pullback} is the full subcategory of $\mathcal L$ consisting of objects $(x_1, x_2, x_3;u,v)$ such that $u$ is a weak equivalence in $\mathcal M_3$ and $v$ is a weak equivalence in $\mathcal M_2$.
\end{definition}

The problem here is that we do not get the homotopy limit that we want.  This definition is identical to that of the homotopy limit of the diagram
\[ \mathcal M_1 \overset{F_1}{\rightarrow} \mathcal M_3 \overset{F_2}{\rightarrow} \mathcal M_2. \]
But this homotopy limit is equivalent to the model category $\mathcal M_1$, as the initial object in the diagram, so we do not get something capturing the structure we expect in a homotopy pullback.

However, the above definition is only one possibility.  Perhaps the problem is that we asked for the map $F_2(x_3) \rightarrow x_2$ to be a weak equivalence; namely, asked for a weak equivalence after passing to the adjoint map.  What if we instead ask for the map $x_3 \rightarrow G_2(x_2)$ to be a weak equivalence, and then use its adjoint to define the morphisms in the model category (so that cofibrations behave well)?  So, here is an alternative definition.

\begin{definition}
	Consider a diagram of model categories
	\[ \mathcal M_1 \overset{F_1}{\rightarrow} \mathcal M_3 \overset{G_2}{\leftarrow} \mathcal M_2   \]
	where $F_1$ is a left Quillen functor and $G_2$ is a right Quillen functor with left adjoint $F_2$.  Define the \emph{lax homotopy pullback} $\mathcal L$
	to be the category whose objects are 5-tuples $(x_1, x_2, x_3; u,v)$ where each $x_i$ is an object of $\mathcal M_i$, and 
	\[ u \colon F_1(x_1) \rightarrow x_3, v \colon x_3 \rightarrow G_2(x_2). \]
	A morphism 
	\[ f \colon (x_1, x_2, x_3; u,v) \rightarrow (y_1, y_2, y_3; z, w) \]
	is given by morphisms $f_i \colon x_i \rightarrow y_i$ in $\mathcal M_i$ such that the diagrams
	\[ \xymatrix{F_1(x_1) \ar[r]^-u \ar[d]_{F_1(f_1)} & x_3 \ar[d]^{f_3} & x_2 \ar[d]_{f_2} & F_2(x_3) \ar[d]^{F_2(f_3)} \ar[l]_-{v'} \\
		F_1(y_1) & y_3 & y_2 \ar[r]^-{w'} & F_2(y_3) \ar[l]_-z} \]
	commute, where $v'$ denotes the adjoint to $v$ and similarly for $w'$.  Give $\mathcal M$ the injective model structure, in which the weak equivalences and cofibrations are defined levelwise.
	
	The \emph{homotopy pullback} is the full subcategory of $\mathcal L$ consisting of objects $(x_1, x_2, x_3;u,v)$ such that $u$ and $v$ are weak equivalences in $\mathcal M_3$.
\end{definition}

However, we claim that this definition still does not work.  The first step is to show that there is a model structure on $\mathcal L$ so that the cofibrant objects have $u$ and $v$ weak equivalences, obtained by right Bousfield localization on the injective model structure, via the same kind of proof as used above for Theorem \ref{limmc}.

As in that proof, we can use the combinatorial structure on each of the model categories $\mathcal M_i$ to produce an appropriate generating set of objects $(x_1, x_2, x_3; u, v)$ which respect to which we localize, and for which $u$ and $v$ are weak equivalences.

A key point in that proof, however, is that taking homotopy colimits of objects in this generating set preserves the property that the maps $u$ and $v$ are weak equivalences.  In our new modified setting, when we look at morphisms, they are defined in terms of $v'$, not in terms of $v$.  If the maps $v'$ were weak equivalences, then that property would be preserved.  It does not seem to be the case, however, that homotopy colimits should preserve that $v$ is a weak equivalence.  We cannot even make any assumption about $v'$ being a weak equivalence when $v$ is, so it seems we lose all control over what we know about $v$.	

In summary, we do not seem to have a good way to take homotopy pullbacks of diagrams consisting of both left and right Quillen functors; we expect that the situation for homotopy limits of similarly mixed diagrams is analogously problematic.


\begin{thebibliography}{99}
\bibitem{adams}
J.F.\ Adams, \emph{Stable Homotopy and Generalized Homology}, University of Chicago Press, 1974.
	
\bibitem{bg}
Scott Balchin and J.P.C.\ Greenlees, Adelic models of tensor-triangulated categories, \emph{Adv.\ Math.} 375 (2020), 107--339.

\bibitem{balmer}
Paul Balmer, The spectrum of prime ideals in tensor triangulated categories, \emph{J.\ Reine Angew.\ Math.} 588 (2005), 149-–168.
	
\bibitem{barwick}
Clark Barwick, On left and right model categories and left and right Bousfield localizations, \emph{Homology, Homotopy Appl.\ } 12 (2010), no.\ 2, 245--320.

categories, \emph{Trans.\ Amer.\ Math.\ Soc.\ } 361 (2009), 525--546.

\bibitem{hocolim}
Julia E.\ Bergner, Homotopy colimits of model categories,  \emph{An Alpine Expedition through Algebraic Topology}, 31--37, \emph{Contemp.\ Math.}, 617, Amer. Math. Soc., Providence, RI, 2014.

\bibitem{fiberprod}
J.E.\ Bergner, Homotopy fiber products of homotopy theories, \emph{Israel J.\ Math.} 185 (2011), 389-–411.

\bibitem{holim}
Julia E.\ Bergner, Homotopy limits of model categories and more general homotopy theories,  \emph{Bull.\ Lond.\ Math.\ Soc.} 44 (2012), no.\ 2, 311--322.

 

\bibitem{bk}
A.K.\ Bousfield and D.M.\ Kan, \emph{Homotopy Limits, Completions, and Localizations, Lecture Notes in Math 304}, Springer-Verlag, 1972.

\bibitem{duggercomb}
Daniel Dugger, Combinatorial model categories have presentations. \emph{Adv.\ Math.\ } 164 (2001), no.\ 1, 177--201.

\bibitem{duggersp}
Daniel Dugger, Spectral enrichments of model categories, \emph{Homol.\ Homotopy
Appl.} 8(1), (2006) 1--30.


%

\bibitem{dkfncxes}
W.G.\ Dwyer and D.M.\ Kan, Function complexes in homotopical algebra, \emph{Topology} 19 (1980), 427--440.

\bibitem{dksimploc}
W.G.\ Dwyer and D.M.\ Kan, Simplicial localizations of categories, \emph{J.\ Pure Appl.\ Algebra}  17  (1980), no. 3, 267--284.

\bibitem{ds}
W.G.\ Dwyer and J.\ Spalinski, Homotopy theories and model
categories, in \emph{Handbook of Algebraic Topology},  Elsevier, 1995.

\bibitem{gj}
P.G.\ Goerss and J.F.\ Jardine,  \emph{Simplicial Homotopy Theory, Progress in Math}, vol. 174, Birkhauser, 1999.

\bibitem{gr}
Javier J.\ Gui\'errez and Constanze Roitzheim, Towers and fibered products of model structures, \emph{Mediterr.\ J.\ Math.} 13 (2016), 3863--3886.

\bibitem{harpaz}
Yonatan Harpaz, Lax limits of model categories, \emph{Theory Appl.\ Categ.} 35 (2020), Paper No. 25, 959–-978.

\bibitem{hp}
Yonatan Harpaz and Matan Prasma, The Grothendieck construction for model categories, \emph{Adv.\ Math.} 281 (2015), 1306–-1363.

\bibitem{hirsch}
Philip S.\ Hirschhorn, \emph{Model Categories and Their
Localizations, Mathematical Surveys and Monographs 99}, American Mathematical Society, 2003.

\bibitem{hovey}
Mark Hovey, \emph{Model Categories, Mathematical Surveys and
Monographs, 63}. American Mathematical Society 1999.

\bibitem{hkvww}
Thomas H\"uttemann, John R.\ Klein, Wolrad Vogell, Friedhelm Waldhausen, and Bruce Williams, The ``fundamental theorem" for the algebraic $K$-theory of spaces, I. \emph{J.\ Pure Appl.\ Algebra} 160 (2001), no. 1, 21--52.

\bibitem{hr}
Thomas H\"uttemann and Oliver R\"ondigs, Twisted diagrams and homotopy sheaves, preprint available at math.AT/0805.4076.

%

\bibitem{lurie}
Jacob Lurie, \emph{Higher topos theory. Annals of Mathematics Studies}, 170. Princeton University Press, Princeton, NJ, 2009.

\bibitem{macl}
Saunders Mac Lane, \emph{Categories for the Working Mathematician, Second Edition, Graduate Texts in Mathematics 5}, Springer-Verlag, 1997.

\bibitem{may}
J.P.\ May, \emph{Simplicial Objects in Algebraic Topology},
University of Chicago Press, 1967.

\bibitem{quillen}
Daniel Quillen, \emph{Homotopical Algebra, Lecture Notes in Math 43}, Springer-Verlag, 1967.

\bibitem{rav}
Douglas C.\ Ravenel, \emph{Nilpotence and Periodicity in Stable Homotopy Theory, Annals of Mathematics Studies}, 128. Princeton University Press, Princeton, NJ, 1992.

\bibitem{reedy}
C.L.\ Reedy, Homotopy theory of model categories, unpublished manuscript, available at http://www-math.mit.edu/\verb1~1psh.

\bibitem{rezk}
Charles Rezk, A model for the homotopy theory of homotopy theory, \emph{Trans.\ Amer.\ Math.\ Soc.\ } 353(3) (2001), 973--1007.

\bibitem{rezktopos}
Charles Rezk, Toposes and homotopy toposes, available at http://www.math.uiuc.edu/\verb1~1rezk/homotopy-topos-sketch.pdf.



\bibitem{toendha}
Bertrand To\"{e}n, Derived Hall algebras, \emph{Duke Math.\ J.\ } 135, no.\ 3 (2006), 587--615.

\bibitem{toendg}
Bertrand To\"{e}n, The homotopy theory of dg-categories and
derived Morita theory, \emph{Invent.\ Math.\ } 167 (2007), no. 3, 615--667.

\bibitem{tv}
Bertrand To\"{e}n and Gabriele Vezzosi, Homotopical algebraic
geometry. I. Topos theory, \emph{Adv.\ Math.\ } 193 (2005), no.\ 2, 257--372.

\end{thebibliography}
\end{document}